\newtheorem{thm}{Theorem}[section]
\newtheorem{lem}[thm]{Lemma}
\newtheorem{prop}[thm]{Proposition}
\theoremstyle{definition}
\newtheorem{defn}[thm]{Definition}
\theoremstyle{remark}
\newtheorem{rem}[thm]{Remark}
\numberwithin{equation}{section}
\begin{document}
\setcounter{page}{1}

\noindent \textbf{{\footnotesize  }}\\[1.00in]
\title[The large sum graph]{The large sum graph related to comultiplication  modules}
\author[H.\ Ansari-Toroghy]{H.\ Ansari-Toroghy$^1$}

\address{$^1$Department of
Mathematics, \newline \indent University
of Guilan, P.O.41335-19141, \newline \indent Rasht, Iran.}
\email{ansari@guilan.ac.ir}

\author[F.\ Mahboobi-Abkenar]{F.\ Mahboobi-Abkenar$^2$}

\address{$^2$Department of
Mathematics, \newline \indent University
of Guilan, P.O.41335-19141, \newline \indent Rasht, Iran.}
\email{mahboobi@phd.guilan.ac.ir}

\keywords{Graph, Non-large submodule, Comultiplication module.  \\
\indent 2010 {\it Mathematics Subject Classification}. Primary: 05C75. Secondary: 13A99, 05C99.}

\begin{abstract}
Let $R$ be a commutative ring and $M$ be an $R$-module. We define the large sum graph, denoted by $\acute{G}(M)$, as a graph with the vertex set of non-large submodules of $M$ and two distinct vertices are adjacent if and only if $N+K$ is a non-large submodule of $M$. In this article, we investigate the connection between the graph-theoretic properties of $\acute{G}(M)$ and some algebraic properties of $M$ when $M$ is a comultiplication $R$-module.
\end{abstract}

\maketitle
\section{Introduction}
Throughout this paper, $R$ will denote a commutative ring with identity and $\Bbb Z$ will denote the ring of integers.

 Let $M$ be an $R$-module. We denote the set of all minimal submodules of $M$ by $Min(M)$ and the sum of all minimal submodules of $M$ by $Soc(M)$. A submodule $N$ of $M$ is called \emph{large} in $M$  and denoted by $N \unlhd M$) in case for every submodule $L$ of $M$, $N\cap L \neq 0$. A module $M$ is called a \emph{uniform} module if the intersection of any two non-zero submodules of $M$ is non-zero.

 A \emph{graph} $G$ is defined as the pair $(V(G), E(G))$, where $V(G)$ is the set of vertices of $G$ and $E(G)$ is the set of edges of $G$. For two distinct vertices $a$ and $b$ denoted by $a-b$ means that $a$ and $b$ are adjacent. The \emph{degree} of a vertex $a$ of graph $G$ which denoted by $deg(a)$ is the number of edges incident on $a$. A regular graph is \emph{$r$-regular} (or regular of degree r) if the degree of each vertex is $r$. If $|V(G)|\geqslant 2$, a \emph{path} from $a$ to $b$  is a series of adjacent vertices $a-v_1-v_2-...-v_n-b$. In a graph $G$, the distance between two distinct vertices $a$ and $b$, denoted by $d(a, b)$ is the length of the shortest path connecting $a$ and $b$. If there is not a path between $a$ and $b$, $d(a, b) = \infty$. The \emph{diameter} of a graph $G$ is $diam(G) = sup\,\{d(a,b)\,|\:a, b\in V(G)\}$. A graph $G$ is called \emph{connected} if for any vertices $a$ and $b$ of $G$ there is a path between $a$ and $b$. If not, $G$ is \emph{disconnected}. The \emph{girth} of $G$, denoted by $g(G)$, is the length of the shortest cycle in $G$. If $G$ has no cycle, we define the girth of $G$ to be infinite. An \emph{$r$-partite} graph is one whose vertex set can be partitioned into $r$ subsets such that no edge has both ends in any one subset. A \emph{complete} $r$-partite graph is one each vertex is jointed to every vertex that is not in the same subset. The \emph{complete bipartite} (i.e, 2-partite) graph with part sizes $m$ and $n$ is denoted by $K_{m,n}$. A \emph{star} graph is a completed bipartite graph with part sizes $m =1$ or $n = 1$. A \emph{clique} of a graph is its maximal complete subgraph and the number of vertices in the largest clique of a graph $G$, denoted by $\omega(G)$, is called the \emph {clique number} of $G$. For a graph $G = (V, E)$, a set $S \subseteq V$ is an \emph{independent} if no two vertices in $S$ are adjacent. The \emph{independence number} $\alpha(G)$ is the maximum size of an independent set in $G$.  The (open) \emph{neighbourhood} $N(a)$ of a vertex $a \in V$ is the set of vertices which are adjacent to $a$. For each $S \subseteq V$, $N(S) = \bigcup_{a \in S}N(a)$ and $N[S] = N(S) \bigcup S$. A set of vertices $S$ in $G$ is a \emph{dominating set}, if $N[S] = V$. The \emph{dominating number}, $\gamma(G)$, of $G$ is the minimum cardinality of a dominating set of $G$ (\cite{7}).  Note that a graph whose vertices-set is empty is a \emph{null graph} and a graph whose edge-set is empty is an \emph{empty} graph.

 A module $M$ is said to be a \emph{comultiplication} $R$-module if for every submodule $N$ of $M$ there exists an ideal $I$ of $R$ such that $N = Ann_M(I)$. Also an $R$-module $M$ is comultiplication module if and only if for each submodule $N$ of $M$, we have $N = (0 :_M Ann_R(N))$ (\cite{1}).

In this article, we introduce and study \emph{the sum  large graph} $\acute{G}(M)$ of $M$, where $M$ is a comultiplication module. In section 2, we give the definition of $\acute{G}(M)$ and consider some basic results on the structure of this graph. In Theorems \ref{2.5} and \ref{2.6}, we provide some useful characterization about $\acute{G}(M)$. In Theorem \ref{2.7}, it is shown that if $\acute{G}(M)$ is connected, then $diam(\acute{G}(M)) \leqslant 2$. Also we prove that if $\acute{G}(M)$ contains a cycle, then $g(\acute{G}(M))= 3$ (Theorem \ref{2.8}).
Moreover, it is proved that if $\acute{G}(M)$ is a connected graph, then $\acute{G}(M)$ has no cut vertex (Theorem \ref{2.9}).
Finally, in section 3, we investigate the clique number, dominating number, and independence number of this graph.

\vspace{0.5 cm}
\section{Basic properties of $\acute{G}(M)$}

\begin{defn}\label{2.1}
Let $M$ be an $R$-module. We define the \emph{large sum graph $\acute{G}(M)$  of $M$ }  with all non-large non-zero submodules of $M$ as vertices and two distinct vertices $N, K$ are adjacent if and only if $N + K$ is a non-large submodule of $M$.
\end{defn}
\vspace{0.2 cm}
A non-zero submodule $S$ of $M$ is said to be \emph{second} if for each $a \in R$, the endomorphism of $M$ given by multiplication by $a$ is either surjective or zero. This implies that $Ann_R(N)$ is a prime ideal of $R$ (\cite{4}).\\
\\
The next lemma plays a key role in the sequel.
\vspace{0.5 cm}
\begin{lem}\label{2.2}
Let $M$ be a non-zero comultiplication $R$-module.
\begin{itemize}
  \item [(a)] Every non-zero submodule of $M$ contains a minimal submodule of $M$. In particular, $Min(M) \neq \varnothing$.
    \item [(b)] Let $N$ be a submodule of $M$. Then $N$ is a large submodule of $M$ if and only if $Soc(M)\subseteq N$.
    \item [(c)] Let $N, K$ be submodules of $M$ and  let $S$ be a second submodule of $M$ with $S\supseteq N + K$.  Then $S\supseteq N$ or $S\supseteq
        K$.
\end{itemize}
\end{lem}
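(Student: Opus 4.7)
The plan is to treat the three parts in order, with part (a) doing the real work and parts (b) and (c) following quickly from it and from definitions.

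For part (a), I would apply Zorn's lemma to the family $\mathcal{F}$ of ideals $I$ of $R$ satisfying $\mathrm{Ann}_R(N)\subseteq I\subsetneq R$ and $(0:_M I)\neq 0$, ordered by inclusion. The family is nonempty: since $M$ is comultiplication and $N\neq 0$, we have $N=(0:_M\mathrm{Ann}_R(N))\neq 0$, so $\mathrm{Ann}_R(N)$ itself lies in $\mathcal{F}$. After securing Zorn's hypothesis, pick a maximal element $\mathfrak{m}\in\mathcal{F}$; I claim $\mathfrak{m}$ is a maximal ideal of $R$ and $(0:_M\mathfrak{m})$ is a minimal submodule of $M$ contained in $N$. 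Indeed, $(0:_M\mathfrak{m})\subseteq(0:_M\mathrm{Ann}_R(N))=N$, and any nonzero $L\subseteq(0:_M\mathfrak{m})$ has $\mathrm{Ann}_R(L)\supseteq\mathfrak{m}$ and $\mathrm{Ann}_R(L)\neq R$ (as $L\neq 0$), so by maximality of $\mathfrak{m}$ in $\mathcal{F}$ we get $\mathrm{Ann}_R(L)=\mathfrak{m}$, whence $L=(0:_M\mathrm{Ann}_R(L))=(0:_M\mathfrak{m})$. The main obstacle is verifying Zorn's hypothesis: for a chain $\{I_\alpha\}$ in $\mathcal{F}$, the union $\bigcup I_\alpha$ is a proper ideal (since no $I_\alpha$ contains $1$), but one must argue that $(0:_M\bigcup I_\alpha)=\bigcap(0:_M I_\alpha)\neq 0$; this is where the comultiplication hypothesis is essential and is the delicate point of the whole argument.

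For part (b), both directions reduce to unwinding definitions once (a) is available. For $(\Rightarrow)$, assume $N\unlhd M$ and let $S\in\mathrm{Min}(M)$; then $N\cap S$ is a nonzero submodule of $S$, and simplicity of $S$ forces $N\cap S=S$, i.e., $S\subseteq N$. Summing over all minimal submodules gives $\mathrm{Soc}(M)\subseteq N$. For $(\Leftarrow)$, assume $\mathrm{Soc}(M)\subseteq N$ and let $L$ be any nonzero submodule of $M$; by part (a), $L$ contains a minimal submodule $S$, and $S\subseteq\mathrm{Soc}(M)\subseteq N$ gives $S\subseteq N\cap L$, so $N\cap L\neq 0$, proving $N\unlhd M$.

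For part (c), the statement as written is immediate from elementary set inclusion: $S\supseteq N+K$ gives $N\subseteq N+K\subseteq S$ and $K\subseteq N+K\subseteq S$, so in particular $S\supseteq N$ (and $S\supseteq K$). No property of $S$ beyond being a submodule is needed, and the hypothesis that $S$ is second plays no role. I will present this one-line observation and flag that if the statement is meant with the opposite inclusion ($S\subseteq N+K$) or with $\cap$ in place of $+$, then one would instead invoke the primality of $\mathrm{Ann}_R(S)$ together with the order-reversing correspondence $L\leftrightarrow\mathrm{Ann}_R(L)$ supplied by comultiplication; but with the wording as given, the argument is just the inclusion $N,K\subseteq N+K$.
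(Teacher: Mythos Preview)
For (b) your argument matches the paper's essentially line for line; for (a) and (c) the paper gives no proof but simply cites \cite{2} and \cite{8}. Your observation on (c) is correct and worth recording: as printed, $S\supseteq N+K$ trivially yields both $S\supseteq N$ and $S\supseteq K$, so the ``second'' hypothesis is idle. The paper's own applications of part (c) (for instance in Lemma~\ref{2.3} and Theorem~\ref{2.6}) confirm that the intended statement is the reverse inclusion, $S\subseteq N+K\Rightarrow S\subseteq N$ or $S\subseteq K$, which is the genuine coprimeness property of second submodules and is what \cite{8} supplies.

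Your Zorn's-lemma plan for (a), however, has a real gap precisely where you flag it. You need $\bigcap_\alpha(0:_M I_\alpha)\neq 0$ for a chain $\{I_\alpha\}\subseteq\mathcal F$, and calling this ``the delicate point'' is not a plan for resolving it. The comultiplication identity lets you rewrite the intersection as $(0:_M J)$ for the proper ideal $J=\bigcup_\alpha I_\alpha$, but a proper ideal need not have nonzero annihilator in an arbitrary module; you must articulate which feature of comultiplication modules forces $(0:_M J)\neq 0$. Until that step is supplied the argument does not close, and the cited theorem in \cite{2} is doing work your sketch has not reproduced.
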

\begin{proof}(a) See \cite[Theorem 3.2]{2}.

(b) Let $N$ be a large submodule of $M$. Assume to the contrary that $Soc(M) \nsubseteq N$. Then for each $S_j \in Min(M)$, we have  $S_j \nsubseteq N$. Since $N$ is a large submodule of $M$, $N \cap S_j \neq 0$. Since $S_j$ is a minimal submodule of $M$ and $N\cap S_j \subseteq S_j$, we have $S_j\cap N = S_j$, which is a contradiction. Conversely, suppose to the contrary that $N$ is not a large submodule of $M$. Then there exists a submodule $K$ of $M$ such that $N \cap K = 0$. By part (a), there exists a minimal submodule $L$ of $M$ such that $L \subseteq K$. So we have $L\cap N \subseteq K\cap N = 0$, which implies that $L \nsubseteq N$, a contradiction. Thus $N$ is a large submodule of $M$.

(c) See \cite[Theorem 2.6] {8}.
\end{proof}
\vspace{0.25 cm}

In the rest of this paper, we assume that $M$ is a non-zero comultiplication $R$-module. We recall that $Min(M) \neq \varnothing$ by Lemma
\ref{2.2} part (a).
\vspace{0.25 cm}
\begin{lem}\label{2.3}
Let $M$ be an $R$-module with $Min(M) = \{S_i\}_{i \in I}$, where $|I| > 1$, and let $\Lambda$ be a non-empty proper finite subset of $I$. Then  $\sum_{\lambda \in \Lambda} S_\lambda$ is non-large submodule of $M$.
 \end{lem}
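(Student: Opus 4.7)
The plan is to translate the non-largeness condition via Lemma~\ref{2.2}(b) into a statement about the socle, and then rule out the containment of a single conveniently chosen minimal submodule in $\sum_{\lambda\in\Lambda}S_\lambda$. By Lemma~\ref{2.2}(b), a submodule of $M$ is non-large precisely when it fails to contain $Soc(M)$. Since $\Lambda$ is a \emph{proper} subset of $I$, we may pick an index $i_0\in I\setminus\Lambda$; because $S_{i_0}\subseteq Soc(M)$, it suffices to prove the single containment $S_{i_0}\not\subseteq\sum_{\lambda\in\Lambda}S_\lambda$.

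Assume, for contradiction, that $S_{i_0}\subseteq\sum_{\lambda\in\Lambda}S_\lambda$. The key observation is that a minimal submodule is automatically a second submodule: for any $r\in R$, multiplication by $r$ on the simple module $S_{i_0}$ is either zero or a nonzero endomorphism of a simple module, and a nonzero endomorphism of a simple module is surjective. Thus Lemma~\ref{2.2}(c) is applicable with $S=S_{i_0}$. Enumerating $\Lambda=\{\lambda_1,\dots,\lambda_n\}$ and bracketing the sum as $S_{\lambda_1}+\bigl(S_{\lambda_2}+\cdots+S_{\lambda_n}\bigr)$, Lemma~\ref{2.2}(c) yields $S_{i_0}\subseteq S_{\lambda_1}$ or $S_{i_0}\subseteq S_{\lambda_2}+\cdots+S_{\lambda_n}$. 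A straightforward induction on $n$ then gives $S_{i_0}\subseteq S_{\lambda_j}$ for some $\lambda_j\in\Lambda$. Since $S_{\lambda_j}$ is minimal and $S_{i_0}$ is a non-zero submodule of it, we obtain $S_{i_0}=S_{\lambda_j}$, which contradicts $i_0\notin\Lambda$.

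The only delicate point is the correct iterative application of Lemma~\ref{2.2}(c) to transport the containment $S_{i_0}\subseteq\sum_{\lambda\in\Lambda}S_\lambda$ down to a single summand; the remaining steps (reduction to the socle and the minimality collision) are formal, and the finiteness of $\Lambda$ is what makes the induction terminate. Note that the hypothesis $|I|>1$ is used exactly to guarantee that $I\setminus\Lambda$ is non-empty so that the witness $S_{i_0}$ exists.
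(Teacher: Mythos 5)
Your proof is correct and follows essentially the same route as the paper's: pick $i_0\in I\setminus\Lambda$, use Lemma~\ref{2.2}(b) to reduce non-largeness to $S_{i_0}\not\subseteq\sum_{\lambda\in\Lambda}S_\lambda$, and apply Lemma~\ref{2.2}(c) to force $S_{i_0}$ into a single summand, contradicting minimality. You merely make explicit two details the paper leaves implicit, namely that minimal submodules are second and that Lemma~\ref{2.2}(c) extends to finite sums by induction.
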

\begin{proof}
 Let $\sum_{\lambda \in \Lambda}S_{\lambda}$ be a large submodule of $M$ and let $j \in I \setminus \Lambda$. Then  by Lemma  \ref{2.2} (b), $S_j\subseteq \sum_{\lambda \in \Lambda} S_{\lambda}$. Since $S_j$ is a second submodule of $M$, by Lemma \ref{2.2} (c), $S_j\subseteq S_{\lambda} $ for some $\lambda \in \Lambda$, a contradiction.
\end{proof}
\vspace{0.25 cm}
We recall that an $R$-module $M$ is said to be \emph{finitely cogenerated} if for every set $\{M_\lambda\}_{\lambda \in \Lambda}$ of submodules of $M$, $\cap_{\lambda \in \Lambda}M_\lambda = 0$ implies $\cap^n_{i=1}M_{\lambda_i} = 0$ for some positive integer $n$ (\cite{3}). Moreover, an $R$-module $M$ is said to be \emph{cocyclic} if Soc(M) is a large and a simple submodule of $M$ (\cite{4}).
\vspace{0.5 cm}
\begin{prop}\label{2.4}
Let $M$ be an $R$-module. Then $\acute{G}(M)$ is a null graph if and only if $M$ is a cocyclic module.
\end{prop}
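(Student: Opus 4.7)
The plan is to unwrap the definition: $\acute{G}(M)$ is a null graph precisely when it has no vertices, i.e.\ when every non-zero submodule of $M$ is large. By Lemma \ref{2.2}(b), this is equivalent to the statement that every non-zero submodule $N$ of $M$ satisfies $Soc(M) \subseteq N$. So the entire argument reduces to proving that this containment property for every non-zero submodule is equivalent to $M$ being cocyclic (i.e.\ $Soc(M)$ being large and simple).

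For the forward direction, I would first use Lemma \ref{2.3} to show that $|Min(M)| = 1$. Indeed, if $Min(M) = \{S_i\}_{i \in I}$ with $|I| > 1$, then taking $\Lambda = \{i_0\}$ for any single $i_0 \in I$ (a proper non-empty finite subset of $I$) would make $S_{i_0}$ a non-large non-zero submodule of $M$, producing a vertex of $\acute{G}(M)$ and contradicting nullity. Hence $Min(M) = \{S\}$ for a unique $S$, and consequently $Soc(M) = S$ is simple. Largeness of $Soc(M)$ is then automatic from Lemma \ref{2.2}(a): any non-zero submodule contains a minimal submodule, which must be $S = Soc(M)$, so $Soc(M) \cap N = Soc(M) \neq 0$.

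For the backward direction, assume $M$ is cocyclic, so $Soc(M)$ is simple. Simplicity of $Soc(M)$ forces $Soc(M)$ itself to be the unique minimal submodule of $M$ (any minimal submodule is contained in $Soc(M)$, and if $Soc(M)$ is simple it admits no proper non-zero submodules, so equals that minimal submodule). Then Lemma \ref{2.2}(a) ensures every non-zero submodule $N$ of $M$ contains some minimal submodule, which must be $Soc(M)$. By Lemma \ref{2.2}(b), every non-zero submodule is large, so the vertex set of $\acute{G}(M)$ is empty.

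I do not anticipate a serious obstacle; the content of the proposition is essentially a rephrasing of Lemmas \ref{2.2} and \ref{2.3}. The only subtle step is the reduction from ``every non-zero submodule is large'' to ``$|Min(M)| = 1$,'' which has to be made via Lemma \ref{2.3} applied with $|\Lambda| = 1$ rather than, say, an intersection argument directly on $Soc(M)$.
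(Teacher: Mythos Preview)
Your argument is correct and is exactly the natural unpacking via Lemmas~\ref{2.2} and~\ref{2.3}; the paper itself merely records ``This is straightforward,'' so you have simply filled in the details the authors omitted.
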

\begin{proof}
This is straightforward.
\end{proof}
\vspace{0.5 cm}
Note that all definitions graph theory are for non-null graphs \cite{6}. So in the rest of this paper we assume that $\acute{G}(M)$ is a non-null graph.
\vspace{0.5 cm}

\begin{lem}\label{2.51}
Let $M$ be an $R$-module. Then $M$ is uniform if and only if $M$ is a cocyclic $R$-module.
\end{lem}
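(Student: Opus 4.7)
The plan is to lean entirely on Lemma \ref{2.2}, whose parts (a) and (b) give us the two tools we need: every non-zero submodule of $M$ sits above some minimal submodule, and a submodule is large exactly when it contains $\mathrm{Soc}(M)$. Together these reduce both implications to short arguments about minimal submodules.

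For the ``if'' direction I would assume $M$ is cocyclic, so $\mathrm{Soc}(M)$ is simple and large. Simplicity means $\mathrm{Soc}(M)$ is itself a minimal submodule and is the only minimal submodule of $M$; denote it by $S$. By Lemma \ref{2.2}(a) every non-zero submodule of $M$ contains some element of $\mathrm{Min}(M)$, which must be $S$. Hence any two non-zero submodules $N,K$ of $M$ both contain $S$, so $N\cap K\supseteq S\neq 0$, showing that $M$ is uniform.

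For the ``only if'' direction I would assume $M$ is uniform. Then any non-zero submodule $N$ of $M$ meets every other non-zero submodule non-trivially, so $N$ is large in $M$. Choose a minimal submodule $S$ of $M$, which exists by Lemma \ref{2.2}(a). Since $S$ is non-zero it is large, and Lemma \ref{2.2}(b) forces $\mathrm{Soc}(M)\subseteq S$. Because $S$ is minimal this inclusion is an equality, so $\mathrm{Soc}(M)=S$ is simple. It is also non-zero and hence large by the uniformity assumption, so $M$ is cocyclic.

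There is no real obstacle; the proof is essentially definition-chasing once Lemma \ref{2.2} is available. The only point that needs a sentence of care is deducing simplicity of $\mathrm{Soc}(M)$ from uniformity, which is handled by showing that a minimal submodule must absorb all of $\mathrm{Soc}(M)$ via Lemma \ref{2.2}(b).
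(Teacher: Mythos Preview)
Your argument is correct and is precisely the routine verification the paper has in mind; the paper itself dismisses the lemma with ``This is obvious'' and gives no further details, so your write-up simply fills in what the authors left implicit using Lemma~\ref{2.2}(a),(b).
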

\begin{proof}
This is obvious.
\end{proof}
\begin{thm}\label{2.5}
Let $M$ be an $R$-module. Then $\acute{G}(M)$ is an empty graph if and only if $Min(M) = \{S_1, S_2\}$ such that $\frac{M}{S_1}$ and $\frac{M}{S_2}$ are finitely cogenerated uniform $R$-modules.
\end{thm}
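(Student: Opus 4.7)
My approach exploits Lemma \ref{2.2}(b): a submodule is large iff it contains $Soc(M)$. Once $|Min(M)| = 2$ is established, writing $Min(M) = \{S_1, S_2\}$ we have $Soc(M) = S_1 \oplus S_2$ (since distinct minimal submodules intersect trivially), and ``non-large'' becomes ``misses $S_1$ or $S_2$''. The proof is then a case analysis on which of $S_1, S_2$ a non-large submodule contains.

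\textbf{Forward direction.} Assume $\acute{G}(M)$ is empty but non-null. I first pin down $|Min(M)| = 2$. If $|Min(M)| = 1$, every non-zero submodule contains the unique minimal by Lemma \ref{2.2}(a), so $M$ is uniform, hence cocyclic by Lemma \ref{2.51}, making $\acute{G}(M)$ null by Proposition \ref{2.4} --- contradiction. If $|Min(M)| \geq 3$, Lemma \ref{2.3} applied to the proper subsets $\{1\}, \{2\}, \{1,2\}$ of the index set shows that $S_1$, $S_2$, and $S_1 + S_2$ are all non-large, producing an edge $S_1 - S_2$. So $|Min(M)| = 2$. Next, for $M/S_1$: if any non-large submodule $A$ of $M$ satisfied $A \supsetneq S_1$, then $A$ and $S_1$ would be distinct non-large vertices with $A + S_1 = A$ still non-large, yielding an edge. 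Since no edges exist, every submodule $A \supsetneq S_1$ must contain $S_1 + S_2$. Translating to $M/S_1$: every non-zero submodule of $M/S_1$ contains the simple submodule $T_1 := (S_1 + S_2)/S_1 \cong S_2$, so $T_1$ is essential and $Soc(M/S_1) = T_1$. Thus $M/S_1$ is cocyclic, hence uniform and finitely cogenerated; symmetry handles $M/S_2$.

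\textbf{Backward direction.} Now assume $Min(M) = \{S_1, S_2\}$ and each $M/S_i$ is finitely cogenerated uniform, equivalently cocyclic with simple essential socle $T_i := (S_1+S_2)/S_i$. I claim the only non-large non-zero submodules of $M$ are $S_1$ and $S_2$; emptiness will then follow since $S_1 + S_2 = Soc(M)$ is large by Lemma \ref{2.2}(b). Let $N$ be any non-large non-zero submodule. If $S_1 \subseteq N$ and $S_2 \not\subseteq N$, then $N/S_1$ is a submodule of $M/S_1$ not containing the essential socle $T_1$, forcing $N/S_1 = 0$ and hence $N = S_1$; the case $S_2 \subseteq N$, $S_1 \not\subseteq N$ is symmetric. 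For the remaining case $S_1, S_2 \not\subseteq N$, note that $(N + S_1)/S_1$ is a non-zero submodule of $M/S_1$ and hence contains $T_1$, so $S_2 \subseteq N + S_1$; writing $0 \neq s_2 = n + t$ with $n \in N$, $t \in S_1$ exhibits a non-zero element of $N \cap Soc(M)$. Since $N$ meets neither $S_1$ nor $S_2$, projecting $N \cap Soc(M) \subseteq S_1 \oplus S_2$ onto each summand is injective with simple image, making $N \cap Soc(M)$ a simple submodule of $M$ distinct from $S_1$ and $S_2$ --- a third minimal submodule, contradicting $|Min(M)| = 2$.

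\textbf{Main obstacle.} The subtlest step is the mixed case $S_1, S_2 \not\subseteq N$ in the backward direction: I must translate the cocyclicity of $M/S_1$ into a statement about $N$ itself by extracting a concrete element of $N \cap Soc(M)$, and then invoke $|Min(M)| = 2$ to rule out a ``diagonal'' simple submodule of $S_1 \oplus S_2$ --- which would otherwise furnish a third element of $Min(M)$.
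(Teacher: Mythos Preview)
Your proof is correct and follows essentially the same route as the paper. Two minor differences worth noting: in the forward direction you argue directly that every submodule $A \supsetneq S_1$ must be large (else $A$ and $S_1$ are adjacent via $A + S_1 = A$), which is a bit cleaner than the paper's approach of first classifying the minimal submodules of $M/S_1$; in the backward direction, the ``mixed case'' $S_1, S_2 \not\subseteq N$ that you treat with the diagonal-simple argument is in fact vacuous---Lemma~\ref{2.2}(a) together with $Min(M) = \{S_1, S_2\}$ already forces any non-zero $N$ to contain $S_1$ or $S_2$, and this is how the paper disposes of that case in one line. Your argument there is valid but unnecessary.
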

\begin{proof}
Let $\acute{G}(M)$ be an empty graph. If $|Min(M)| > 2$, then by Lemma \ref{2.3}, $S_1, S_2 \in Min(M)$ are adjacent, a contradiction. Thus $Min(M) =\{S_1, S_2\}$. Now we claim that $\frac{S_1+S_2}{S_1}$ is a minimal submodule of $\frac{M}{S_1}$ because $\frac{S_2}{S_1 \cap S_2} \simeq \frac{S_1 + S_2}{S_1}$, where $S_1 \cap S_2 = 0$. We show that $\frac{S_1 + S_2}{S_1}$ is the only minimal submodule of $\frac{M}{S_1}$. Suppose that $\frac{K}{S_1}$ is a minimal submodule of $\frac{M}{S_1}$. If $\frac{K}{S_1}$  is a large submodule of $\frac{M}{S_1}$, then $\frac{S_1 + S_2}{S_1} = \frac{Soc(M)}{S_1} \subseteq \frac{K}{S_1}$ which implies that $\frac{S_1 + S_2}{S_1} = \frac{K}{S_1}$, a contradiction. Now assume that $\frac{K}{S_1}$  is a non-large submodule of $\frac{M}{S_1}$. Then $K$ is a non-large submodule of $M$. So we have $K + S_1 = K \ntrianglelefteq M$ (i.e., $K$ is a non-large submodule of $M$) which follows that $K$ and $S_1$ are adjacent, a contradiction. Thus $\frac{S_1 + S_2}{S_1}$ is the only minimal submodule of $\frac{M}{S_1}$, and therefore $\frac{M}{S_1}$ is a uniform module. Since $Soc(\frac{M}{S_1}) = \frac{S_1 + S_2}{S_1}$ is a simple module, $\frac{M}{S_1}$ is a finitely cogenerated module by \cite [Proposition 10.7]{3}. Conversely, let $Min(M) = \{S_1, S_2\}$. Then clearly, $S_1$ and $S_2$ are not adjacent. We claim that there is no vertex $N \neq S_1, S_2$.  Assume to the contrary that $N$ is  a vertex of $\acute{G}(M)$. By Lemma \ref{2.2} (a), $S_1 \subseteq N$ or $S_2 \subseteq N$. Without loss of generality we can assume that $S_1 \subseteq N$. One can see that $\frac{S_1 + S_2}{S_1}$ is a minimal submodule of $\frac{M}{S_1}$. Since $\frac{M}{S_1}$ is a cocyclic module by lemma \ref{2.51}, for any submodule $\frac{N}{S_1}$, we have $\frac{S_1 + S_2}{S_1} = Soc(\frac{M}{S_1}) \subseteq \frac{N}{S_1}$. Thus each submodule $N$ of $M$ is a large submodule by Lemma \ref{2.2} (b), a contradiction. Hence $\acute{G}(M)$ is an empty graph.
\end{proof}

\begin{thm}\label{2.6}
Let $M$ be an $R$-module. The following statements are equivalent.
\begin{itemize}
\item[(i)] $\acute {G}(M)$ is not connected.
\item[(ii)] $|Min(M)| = 2$
\item[(iii)] $\acute {G}(M) = \acute{G_1} \cup \acute{G_2}$, where $\acute{G_1}$ and $\acute{G_2}$ are complete and disjoint subgraphs.
\end{itemize}
\end{thm}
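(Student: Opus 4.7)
The plan is to reduce adjacency in $\acute G(M)$ to a combinatorial condition on the index set $I$ of $Min(M) = \{S_i\}_{i \in I}$, and then verify the three implications in the order (iii) $\Rightarrow$ (i), (ii) $\Rightarrow$ (iii), (i) $\Rightarrow$ (ii). For each vertex $v$ set $I_v = \{i \in I : S_i \subseteq v\}$ and $J_v = I \setminus I_v$. By Lemma \ref{2.2}(a)(b), $v$ is a non-zero non-large submodule iff $\varnothing \neq I_v \subsetneq I$. The crucial preliminary is to invoke Lemma \ref{2.2}(c) on the second submodules $S_i$ (exactly as in the proof of Lemma \ref{2.3}): $S_i \subseteq N+K$ forces $S_i \subseteq N$ or $S_i \subseteq K$, so that $I_{N+K} = I_N \cup I_K$ and hence $N, K$ are adjacent iff $J_N \cap J_K \neq \varnothing$.

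The direction (iii) $\Rightarrow$ (i) is immediate from the definition of disconnectedness. For (ii) $\Rightarrow$ (iii), if $Min(M) = \{S_1, S_2\}$, then every vertex $v$ has $I_v$ a non-empty proper subset of $\{1,2\}$, hence a singleton, so the vertex set partitions as $V_1 = \{v : I_v = \{1\}\}$ and $V_2 = \{v : I_v = \{2\}\}$. Each $V_j$ is non-empty since $S_j \in V_j$, where $S_j$ is non-large because $S_{3-j} \not\subseteq S_j$ by minimality. Inside each $V_j$ all the $J_v$'s equal the same singleton, so any two vertices are adjacent; across the two classes the $J_v$'s are disjoint singletons, so no edge exists. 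This is precisely the decomposition in (iii).

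For (i) $\Rightarrow$ (ii), the standing assumption that $\acute G(M)$ is non-null rules out $|Min(M)| = 1$: a unique minimal submodule would, by Lemma \ref{2.2}(a), lie in every non-zero submodule, making $Soc(M)$ large and simple, so that $M$ is cocyclic and $\acute G(M)$ is null by Proposition \ref{2.4}. It remains to rule out $|I| \geq 3$, which I do by showing that any two distinct vertices $N, K$ share a common neighbour of the simplest possible form $S_m$. By the reformulation, $S_m$ is adjacent to $v \in \{N,K\}$ iff $J_v \neq \{m\}$, and $S_m \neq v$ unless $v$ is itself a minimal submodule. For a fixed $v$, the bad indices $\{m : J_v = \{m\}\}$ and $\{m : v = S_m\}$ correspond to $|I_v| = |I|-1$ and $|I_v| = 1$ respectively, which are incompatible when $|I| \geq 3$. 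Hence each of $N, K$ contributes at most one bad index, and since $|I| \geq 3$ a good $m$ exists, yielding the path $N - S_m - K$.

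The main obstacle is the combinatorial recasting of adjacency via Lemma \ref{2.2}(c); once that is in place, (ii) $\Rightarrow$ (iii) is bookkeeping, and (i) $\Rightarrow$ (ii) reduces to a short pigeonhole count whose one delicate point is ensuring $S_m \neq N, K$, for which the incompatibility of $|I_v| = 1$ and $|I_v| = |I|-1$ under $|I| \geq 3$ is essential.
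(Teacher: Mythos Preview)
Your proof is correct. The combinatorial recasting via $I_v$ and $J_v$ is sound: the key identity $I_{N+K}=I_N\cup I_K$ follows from Lemma~\ref{2.2}(c) (minimal submodules being simple, hence second), and the adjacency criterion $J_N\cap J_K\neq\varnothing$ is then exactly Lemma~\ref{2.2}(b). Your pigeonhole count in (i)$\Rightarrow$(ii) is accurate, including the delicate point that the two ``bad'' conditions $|I_v|=1$ and $|I_v|=|I|-1$ are mutually exclusive once $|I|\geq 3$, so each vertex forbids at most one index $m$.

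Compared with the paper: the implication (ii)$\Rightarrow$(iii) is essentially identical (partition by which minimal submodule lies inside), and (iii)$\Rightarrow$(i) is trivial in both. The real difference is in (i)$\Rightarrow$(ii). The paper argues by picking minimal submodules $S_1\subseteq N$, $S_2\subseteq K$ and, when $|Min(M)|>2$, invoking Lemma~\ref{2.3} to get the path $N-S_1-S_2-K$ of length at most $3$. You instead show directly that any two vertices share a common \emph{minimal} neighbour $S_m$, yielding a path of length $2$. Your route is a bit sharper---it already contains the content of Theorem~\ref{2.7} ($\mathrm{diam}\leq 2$)---at the cost of the extra bookkeeping needed to guarantee $S_m\neq N,K$. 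The paper's route avoids that bookkeeping but produces a longer path. Both rest on the same structural input, Lemma~\ref{2.2}(c).
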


\begin{proof}
$(i)\Rightarrow (ii)$ Assume to the contrary that $|Min(M)| > 2$. Since $\acute{G}(M)$ is not connected, we can consider two components $\acute{G_1}, \acute{G_2}$ and $N, K$ two submodules of $M$ such that $N \in \acute{G_1}$ and $K \in \acute{G_2}$. Choose $S_1, S_2 \in Min(M)$ such that $S_1 \subseteq N$ and $S_2 \subseteq K$. If $S_1 = S_2$, then $N-S_1-K$ is a path, a contradiction. So we can assume that $S_1 \neq S_2$. Since $Min(M) > 2$, $S_1 + S_2$ is a non-large submodule  of $M$ by Lemma \ref{2.3}. Thus $N-S_1-S_2-K$ is a path between $\acute{G_1}$ and $\acute{G_2}$, a contradiction. Therefore, $|Min(M)| = 2$.\\
$(ii)\Rightarrow (iii)$ Let $Min(M) = \{S_1, S_2\}$. Set $\acute{G_j} := \{N \leq M\, \mid\, N \subseteq S_j\, and \,N \not\trianglelefteq M\}$, where $j = 1, 2$. Assume that $N, K \in \acute{G_1}$. We claim that $N$ and $K$ are adjacent. Otherwise, If $N+K\trianglelefteq M $, then $S_1 + S_2 = Soc(M) \subseteq N+K$ by Lemma \ref{2.2} (b). Now by using Lemma \ref{2.2} (c), $S_1 + S_2 \subseteq N$ or $S_1 + S_2 \subseteq K$ . So $N$ or $K$ are large submodules of $M$, a contradiction. By using similar arguments for $\acute{G_2}$, we can conclude that $\acute{G_1}, \acute{G_2}$ are complete subgraphs of $\acute{G}(M)$. We claim that these two subgraphs are disjoint. Assume to the contrary that $N_1 \in \acute{G_1}$ and $N_2 \in \acute{G_2}$ are adjacent. Then  $Soc(M) = S_1 + S_2\subseteq N_1 + N_2$ which implies that $N_1 + N_2$ is a large submodule of $M$ by Lemma \ref{2.2} (b), a contradiction.\\
$(iii) \Rightarrow (i)$ This is obvious.
\end{proof}
\vspace{0.25 cm}
\begin{rem}\label{1}
 The condition that ``$M$ is a comultiplication module'' can not be omitted in Theorem \ref{2.6}. For example, let $M = \Bbb Z_2 \oplus \Bbb Z_4$ be a $\Bbb Z$-module and let $N_1:=\:  (0, 1)\Bbb Z,\, N_2:=\:  (0, 2)\Bbb Z,\, N_3:=\:  (1, 0)\Bbb Z,\, N_4:=\:  (1,1)\Bbb Z,\,\, and\, N_5:=\: (1, 2)\Bbb Z$. Then $V(\acute{G}(M)) = \{N_1, N_2, N_3, N_4, N_5\}$ and $Min(M) = \{N_2, N_3, N_5\}$. Thus $|Min(M)|> 2$ but  $\acute{G}(M)$ is not a connected graph.
\end{rem}

\begin{thm}\label{2.7}
Let $\acute{G}(M)$ be a connected graph. Then $diam(\acute{G}(M)) \leqslant 2$.
\end{thm}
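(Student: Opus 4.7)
The plan is to fix two distinct non-adjacent vertices $N, K$ of $\acute{G}(M)$, so that $N + K$ is large in $M$, and exhibit a third vertex $L$ adjacent to both, yielding $d(N, K) \leqslant 2$. First I would record that under our standing hypotheses $|Min(M)| \geqslant 3$: the case $|Min(M)| = 1$ would make $Soc(M)$ simple and large (by Lemma \ref{2.2}(a)), hence $M$ cocyclic, and then $\acute{G}(M)$ is null by Proposition \ref{2.4}; meanwhile $|Min(M)| = 2$ forces $\acute{G}(M)$ to be disconnected by Theorem \ref{2.6}. The key observation is that, since $N+K$ is large, Lemma \ref{2.2}(b) gives $Soc(M) \subseteq N+K$, and since minimal submodules of a comultiplication module are second, Lemma \ref{2.2}(c) forces every $S \in Min(M)$ to lie in $N$ or in $K$. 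Setting $Min_N := \{S \in Min(M) : S \subseteq N\}$ and $Min_K := \{S \in Min(M) : S \subseteq K\}$, we then have $Min(M) = Min_N \cup Min_K$ with both pieces non-empty (as $N$ and $K$ are non-large, neither contains $Soc(M)$).

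I would then split on whether $Min_N \cap Min_K$ is non-empty. If the intersection contains some $S$, take $L := S$; both $L + N = N$ and $L + K = K$ are non-large, so $L$ is adjacent to both, and $L \neq N$ because otherwise $N = S \subseteq K$ would give $N + K = K$ non-large, contradicting non-adjacency (symmetrically $L \neq K$). If instead $Min_N \cap Min_K = \varnothing$, then because $|Min(M)| \geqslant 3$ with both pieces non-empty, at least one of them, say $Min_N$, has size $\geqslant 2$; pick any $S \in Min_N$ and set $L := S$. Certainly $L + N = N$ is non-large. For $L + K = S + K$, a minimal submodule $T$ sits in $S + K$ iff $T = S$ or $T \in Min_K$, by Lemma \ref{2.2}(c); hence any $S' \in Min_N \setminus \{S\}$ witnesses $Soc(M) \not\subseteq S + K$, so $S + K$ is non-large and $L$ is adjacent to $K$ as well.

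To finish I must verify $L$ is genuinely a third vertex in this disjoint case: $L \neq N$ because $|Min_N| \geqslant 2$ prevents $N$ from being minimal (a minimal submodule contains only itself among minimal submodules), and $L \neq K$ because $S \in Min_N$ and disjointness force $S \not\subseteq K$. The main obstacle is precisely this bookkeeping, namely ensuring the intermediary $L$ is distinct from both endpoints; this is where the bound $|Min(M)| \geqslant 3$ obtained from connectedness gets used, ruling out the awkward possibility that $N$ or $K$ is itself a minimal submodule when $Min_N$ and $Min_K$ are disjoint.
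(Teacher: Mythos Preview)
Your proof is correct and rests on the same ingredients as the paper's (Lemma~\ref{2.2}(b),(c) and the bound $|Min(M)|\geqslant 3$ coming from Theorem~\ref{2.6}), but the case analysis is organized differently. The paper first fixes particular minimal submodules $S_1\subseteq N$ and $S_2\subseteq K$ and checks whether $N+S_2$ or $K+S_1$ is already non-large, obtaining a bridge through $S_2$ or $S_1$; only when both sums are large does it pick a third minimal $S_3\neq S_1,S_2$ and argue via Lemma~\ref{2.2}(c) that $S_3\subseteq N$ and $S_3\subseteq K$, so $N-S_3-K$ is a path. Your partition $Min(M)=Min_N\cup Min_K$ followed by a split on whether $Min_N\cap Min_K$ is empty is a cleaner repackaging of the same dichotomy: your non-empty-intersection case corresponds exactly to the paper's ``both large'' endgame, and your disjoint case covers the paper's first two subcases. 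One thing your write-up handles more carefully than the paper is the verification that the intermediary $L$ is genuinely distinct from $N$ and $K$; the paper leaves this implicit.
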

 \begin{proof}
 Let $N$ and $K$ be two vertices of $\acute{G}(M)$ such that they are not adjacent. By Lemma \ref{2.2} (a), there exist two minimal submodules $S_1, S_2$ of $M$ such that $S_1 \subseteq N$ and $S_2 \subseteq K$. If $N + S_2 \ntrianglelefteq M$, then $N-S_2-K$ is a path. So $d(N, K ) = 2$. Similarly, if $K + S_1 \ntrianglelefteq M$, then $d(N, K) = 2$. Now assume that $N + S_2 \trianglelefteq M$ and $K + S_1 \trianglelefteq M$. By Theorem \ref{2.6}, $|Min(M)| \geq 3$ because $\acute{G}(M)$ is a connected graph. Let $S_3$ be a minimal submodule of $M$ such that $S_3 \neq S_1, S_2$. Thus by Lemma \ref{2.2} (b),  we have $S_3 \subseteq Soc(M) \subseteq N + S_3$ and $S_3 \subseteq Soc(M) \subseteq K + S_1$. Now Lemma \ref{2.2} (c) shows that $S_3 \subseteq N$ and $S_3 \subseteq K$. Hence we have $N-S_3-K$. Therefore, $d(N, K) = 2$.
 \end{proof}

 \begin{thm}\label{2.8}
 Let $M$ be an $R$-module and $\acute{G}(M)$ contains a cycle. Then $g(\acute{G}(M)) = 3$.
 \end{thm}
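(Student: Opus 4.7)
The plan is to split on the size of $Min(M)$, using Theorem \ref{2.6} to control the two-minimal case and Lemma \ref{2.3} to produce a triangle in all other situations.

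First I would observe that since $\acute{G}(M)$ contains a cycle it is not null, so $|Min(M)| \geq 2$ (if $Min(M) = \{S\}$ then Lemma \ref{2.2}(a) forces every non-zero submodule to contain $S$, making $S = Soc(M)$ a large simple submodule so that $M$ is cocyclic, which by Proposition \ref{2.4} would make $\acute{G}(M)$ null, contradicting the existence of a cycle).

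Next I would handle the case $|Min(M)| \geq 3$, which is the main content. Pick any three distinct minimal submodules $S_1, S_2, S_3 \in Min(M)$. Each two-element subset $\{S_i, S_j\}$ is a proper non-empty finite subset of the index set, so by Lemma \ref{2.3} the sum $S_i + S_j$ is a non-large submodule of $M$ for every pair. Hence $S_1, S_2, S_3$ are pairwise adjacent in $\acute{G}(M)$, forming a $3$-cycle and giving $g(\acute{G}(M)) = 3$.

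Finally I would treat $|Min(M)| = 2$ using Theorem \ref{2.6}: here $\acute{G}(M)$ decomposes as $\acute{G_1} \cup \acute{G_2}$ with $\acute{G_1}, \acute{G_2}$ complete and disjoint. Any cycle must lie entirely within one of these components, so that component has at least three vertices; since it is complete, any three of its vertices form a triangle, again yielding $g(\acute{G}(M)) = 3$. Combining the cases gives the result. No step is genuinely delicate — the essential point is just that Lemma \ref{2.3} automatically produces a triangle on minimal submodules once $|Min(M)| \geq 3$, and the remaining case is pinned down by Theorem \ref{2.6}.
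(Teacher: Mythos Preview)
Your proof is correct and follows essentially the same route as the paper: a case split on $|Min(M)|$, invoking Theorem~\ref{2.6} when $|Min(M)|=2$ to get complete components (hence a triangle once a cycle exists), and Lemma~\ref{2.3} when $|Min(M)|\geq 3$ to obtain the triangle $S_1\!-\!S_2\!-\!S_3\!-\!S_1$. Your justification that $|Min(M)|\geq 2$ is slightly more explicit than the paper's (which relies on the standing non-null assumption following Proposition~\ref{2.4}), but the argument is otherwise identical.
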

 \begin{proof}
 If $|Min(M)| = 2$, then $\acute{G}(M) = \acute{G_1} \cup \acute{G_2}$, where $\acute{G_1}$ and $\acute{G_2}$ are complete disjoint subgraphs by Theorem \ref{2.6}. Since $\acute{G}(M)$  contains a cycle and $\acute{G_1}, \acute{G_2}$ are disjoint complete subgraphs, $g(\acute{G}(M)) = 3$. Now assume that $|Min(M)| \geq 3$ and choose $S_1, S_2,\: and\: S_3 \in Min(M)$. By Lemma \ref{2.3}, $S_1-S_2-S_3-S_1$ is a cycle. Hence $g(\acute{G}(M)) = 3$.
 \end{proof}

 A vertex $a$ in a connected graph $G$ is a \emph{cut vertex} if $G-\{a\}$ is disconnected.

 \begin{thm}\label{2.9}
 Let $M$ be an $R$-module. If $\acute{G}(M)$ is a connected graph, then $\acute{G}(M)$ has no cut vertex.
 \end{thm}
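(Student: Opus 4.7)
The plan is a proof by contradiction based on the structural fact that the minimal submodules of $M$ form a clique in $\acute{G}(M)$. Assume $N$ is a cut vertex; pick two components $\acute{G_1},\acute{G_2}$ of $\acute{G}(M) - \{N\}$ and vertices $A \in \acute{G_1}$, $B \in \acute{G_2}$, so that $A$ and $B$ are not adjacent in $\acute{G}(M)$. I first rule out the small possibilities for $|Min(M)|$: if $|Min(M)| = 1$, then Lemma \ref{2.2}(a)(b) force every non-zero submodule of $M$ to be large, making $\acute{G}(M)$ null, against our standing assumption; and connectedness of $\acute{G}(M)$ together with Theorem \ref{2.6} rules out $|Min(M)| = 2$. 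Hence $|Min(M)| \geq 3$, and Lemma \ref{2.3} makes any two distinct minimal submodules adjacent in $\acute{G}(M)$. Setting $K := Min(M) \setminus \{N\}$, we obtain a clique of size at least two inside $\acute{G}(M) - \{N\}$.

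The core claim is that every vertex $V \neq N$ of $\acute{G}(M)$ either belongs to $K$, or is adjacent in $\acute{G}(M) - \{N\}$ to some element of $K$. By Lemma \ref{2.2}(a), $V$ contains a minimal submodule of $M$. If some minimal submodule $T \subseteq V$ is different from $N$, then either $V = T \in K$, or $V \supsetneq T$ and the edge $V - T$ does the job. The delicate residual case is that every minimal submodule contained in $V$ equals $N$; then $N$ itself is minimal and $V \supsetneq N$. Fixing any minimal submodule $S \neq N$ (available because $|Min(M)| \geq 3$), I would apply Lemma \ref{2.2}(c) to each minimal submodule contained in $V + S$: such a minimal submodule lies in $V$ or in $S$, hence equals $N$ or $S$. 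Since $|Min(M)| \geq 3$, some third minimal submodule is missing from $V + S$, so Lemma \ref{2.2}(b) gives $V + S \ntrianglelefteq M$ and yields the edge $V - S$ in $\acute{G}(M) - \{N\}$ with $S \in K$.

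Applying the core claim to both $A$ and $B$, pick partners $T_A, T_B \in K$ (taking $T_A = A$ if $A \in K$, similarly for $B$). The case $A, B \in K$ is immediately excluded: Lemma \ref{2.3} would make $A$ and $B$ adjacent, contradicting the separation. For the same reason $T_A = B$ and $T_B = A$ are impossible. In all remaining configurations the clique structure of $K$ provides a path $A - T_A - B$ (when $T_A = T_B$) or $A - T_A - T_B - B$ (otherwise), lying entirely in $\acute{G}(M) - \{N\}$ and joining $A$ to $B$, contradicting that they lie in different components. I expect the only real obstacle to be the residual case of the core claim, where the only minimal submodule inside $V$ is the forbidden $N$ itself; the fact that the comultiplication hypothesis gives Lemma \ref{2.2}(c) (so that minimal submodules behave like second submodules) is exactly what allows us to substitute a different minimal submodule for $N$ as a neighbour of $V$.
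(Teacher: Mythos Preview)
Your argument is correct, and it is organised around a cleaner structural observation than the paper's own proof: you show that the set $K = Min(M)\setminus\{N\}$ is a connected dominating set of $\acute{G}(M)-\{N\}$ (a clique of size $\geq 2$ by Lemma~\ref{2.3}, dominating by your core claim), which immediately gives connectivity of the vertex-deleted graph. The paper instead first invokes Theorem~\ref{2.7} to fix a specific shortest path $K-N-L$ through the alleged cut vertex, argues that $N$ must be minimal (otherwise any minimal $S\subsetneq N$ already yields the detour $K-S-L$), and then uses the largeness of $K+L$ together with Lemma~\ref{2.2}(c) to locate minimals $S_i\subseteq L$, $S_j\subseteq K$ (both distinct from $N$) producing the bypass $K-S_j-S_i-L$. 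Your route avoids any appeal to Theorem~\ref{2.7} and treats the removed vertex uniformly, with minimality of $N$ surfacing only in your residual case; the paper's route is more hands-on but isolates the extra fact that a cut vertex would have to be minimal. One cosmetic point: in your final paragraph the bookkeeping with $T_A,T_B$ is slightly awkward when $A\in K$ (you set $T_A=A$, so the displayed path $A-T_A-\cdots$ degenerates); it is cleaner simply to say that a clique dominating every vertex of $\acute{G}(M)-\{N\}$ forces that graph to be connected.
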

 \begin{proof}
 Assume on the contrary that there exists a vertex $N \in V(\acute{G}(M))$ such that $\acute{G}(M) \setminus N$ is not connected. Thus there exist at leat two vertices $K, L$ such that $N$ lies in every path between them. By Theorem \ref{2.7}, the shortest path between $K$ and $L$ is length of two. So we have $K-N-L$. Firstly, we claim that $N$ is a minimal submodule of $M$. Otherwise, there exists a minimal submodule $S$ of $M$ such that $S \subset N$ by Lemma \ref{2.2} (a). Since $S + K \subseteq N + K$ and $N + K \not\trianglelefteq M$, we have $S + K \ntrianglelefteq M$. By similar arguments, $S + L$ is a non-large submodule of $M$. Hence $K-S-L$ is a path in $\acute{G}(M) \setminus N$, a contradiction. Thus $N$ is a minimal submodule of $M$. Now we claim that there is a minimal submodule $S_i \neq N$ such that $S_i \nsubseteq K$. Suppose on the contrary that $S_i \subseteq K$ for each $S_i \in Min(M)$. So we have $Soc(M) \subseteq K + N$. This implies that $K + N$ is a large submodule of $M$ by Lemma \ref{2.2} (b), a contradiction. Similarly, there exits a minimal submodule $S_j \neq N$ of $M$ such that $S_j \nsubseteq L$. Note that for each $S_t \in Min(M)$, we have $S_t \subseteq K + L$ because $K + L$ is a large submodule of $M$. So $S_t \subseteq K$ or $S_t \subseteq L$ by Lemma \ref{2.2} (c). Now let $N \neq S_i, S_j \in Min(M)$ such that $S_i \nsubseteq K$ and $S_j\nsubseteq L$ (Note that $S_i \neq S_j$). Hence we have $S_i \subseteq L$ and $S_j \subseteq K$. This implies that $K-S_i-S_j-L$ is a path in $\acute{G}(M) \setminus L$, a contradiction.
 \end{proof}

 \begin{thm}\label{2.10}
 Let $M$ be an $R$-module. Then $\acute{G}(M)$ can not be a complete $n$-partite graph.
 \end{thm}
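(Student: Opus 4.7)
My plan is to argue by contradiction: suppose $\acute{G}(M)$ is a complete $n$-partite graph ($n\geq 2$) with parts $V_1,\ldots,V_n$. The essential structural fact I would exploit is that, on distinct vertices, the relation ``$u \not\sim v$'' coincides with ``$u,v$ lie in a common part'', and in particular non-adjacency is transitive. I would then split into cases according to $|Min(M)|$.

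The case $|Min(M)|=2$ is immediate from Theorem \ref{2.6}: $\acute{G}(M)$ is disconnected, which rules out complete $n$-partite for $n\geq 2$.

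For $|Min(M)|=m$ finite with $m\geq 3$, write $Min(M)=\{S_1,\ldots,S_m\}$ and consider the three submodules
\[
N_1:=S_1+S_2+\cdots+S_{m-1},\qquad N_2:=S_1+S_m,\qquad N_3:=S_2+\cdots+S_{m-1}.
\]
Each is a sum over a proper non-empty subset of $\{1,\ldots,m\}$, hence non-large by Lemma \ref{2.3}, and Lemma \ref{2.2}\,(c) applied to the appropriate minimal second submodules shows the three are pairwise distinct. A direct computation gives $N_1+N_2 = N_2+N_3 = Soc(M)$ (large, by Lemma \ref{2.2}\,(b)), while $N_1+N_3=N_1$ is non-large. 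Hence $N_1 \not\sim N_2$ and $N_2 \not\sim N_3$ but $N_1 \sim N_3$, violating transitivity of non-adjacency.

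Finally, if $|Min(M)|$ is infinite, Lemma \ref{2.3} shows that any two distinct minimal submodules are adjacent, so $Min(M)$ forms an infinite clique in $\acute{G}(M)$. Since a complete $n$-partite graph has clique number exactly $n$, this is again a contradiction.

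The main obstacle I expect is engineering the triple in the finite case $|Min(M)|=m\geq 3$: naive candidates such as $S_1,\,S_1+S_2,\,S_2$ yield only a triangle, because each pairwise sum is still a proper sum of minimals and hence non-large. The triple $(N_1,N_2,N_3)$ above is arranged precisely so that two of the pairwise sums exhaust all of $Min(M)$ (and hence contain $Soc(M)$, making them large), while the third does not.
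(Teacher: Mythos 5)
Your proof is correct, but it follows a genuinely different route from the paper's. The paper argues by locating the minimal submodules inside the parts: since any two minimal submodules are adjacent (Lemma \ref{2.3}), each part contains at most one, so $|Min(M)|\leq n$; it then shows every part must contain exactly one minimal submodule, and finally exhibits $K=S_3+\cdots+S_n$, which is adjacent to \emph{every} minimal submodule and hence to a representative of every part, so $K$ fits in no part. You instead exploit the cleaner combinatorial obstruction that in a complete $n$-partite graph non-adjacency of distinct vertices means ``same part'' and is therefore transitive, and you engineer the triple $N_1=S_1+\cdots+S_{m-1}$, $N_2=S_1+S_m$, $N_3=S_2+\cdots+S_{m-1}$ so that $N_1+N_2=N_2+N_3=Soc(M)$ is large while $N_1+N_3=N_1$ is not; the distinctness of the three vertices via Lemma \ref{2.2}(c) and their non-largeness via Lemma \ref{2.3} are exactly right. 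Your separate treatment of $|Min(M)|=2$ (disconnectedness via Theorem \ref{2.6}) and of infinite $Min(M)$ (infinite clique versus clique number $n$) is also sound, and in fact your write-up is tighter than the paper's, whose intermediate step about $t<n$ is muddled. The only conventions you should make explicit, though both are consistent with the paper's standing assumptions, are that $n\geq 2$ (an empty graph is complete $1$-partite, and Theorem \ref{2.5} shows that case can occur) and that $|Min(M)|=1$ forces the null graph, which is excluded at the outset.
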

 \begin{proof}
 Suppose on the contrary that $\acute{G}(M)$ is a complete $n$-partite graph with parts $U_1, U_2, ..., U_n$. By Lemma \ref{2.3}, for every $S_i, S_j \in Min(M)$, $S_i$ and $S_j$ are adjacent. Hence each $U_i$ contains at most one minimal submodule. By Pigeon hole principal, $|Min(M)| \leqslant n$. Now we claim that $|Min(M)| = t$ where $t < n$. Let $S_i \in V_i$, for each $i$ ($1\leq i \leq t$). Then $V_{t+1}$ contains no minimal submodule of $M$. By Lemma \ref {2.3}, $\Sigma_{j \neq i}S_j$ is a non-large submodule of $M$. Clearly, $\Sigma_{j \neq i}S_j$ and $S_i$ are not adjacent  because $Soc(M) = \Sigma_{j \neq i}S_j + S_i$. Hence $\Sigma_{j \neq i}S_j \in V_i$. Let $N$ be a vertex in $V_{t + 1}$. Then by Lemma \ref{2.2} (a), there exists $S_k \in Min(M)$ such that $S_k \subseteq N$. So $N$ and $S_k$ are adjacent, where $S_k \in V_k$. Since $\acute{G}(M)$ is a complete $n$-partite graph,  $N$ adjacent to all vertices in $V_k$. So $N$ and $\Sigma_{j \neq k} S_j$ are adjacent. However, $Soc(M) = S_k + \Sigma_{j \neq k}S_j$ which implies that $N + \Sigma_{j \neq k}S_j \trianglelefteq M$ by Lemma \ref{2.3}, a contradiction. Hence $|Min(M)| = t$. Now set $K := \Sigma_{i = 3}S_i$. By Lemma \ref{2.3}, $K$ is a non-large submodule of $M$. Since $K + S_1 = \Sigma_{i \neq 2}S_i \ntrianglelefteq M$, $K$ and $S_1$ are adjacent. Similarly, $K$ is adjacent to $S_2$. Thus $K \not\in V_1, V_2$. Furthermore, $K + S_i = K \ntrianglelefteq M$ for each $i$ ($1\leq i\leq n$). Hence $K$ is adjacent to all  minimal submodules $S_i$ of $M$. So for each $i$ ($1\leq i\leq n$), $K \not \in V_i$, a contradiction.
 \end{proof}
 \begin{prop}\label{2.11}
 Let $M$ be an $R$-module with $|Min(M)| < \infty$. Then we have the following.
 \begin{itemize}
 \item [(i)] There is no vertex in $\acute{G}(M)$ which is adjacent to every other vertex.
 \item [(ii)] $\acute{G}(M)$ can not be a complete graph.
 \end{itemize}
  \end{prop}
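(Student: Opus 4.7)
My plan is to prove (i) by contradiction and then deduce (ii) as an immediate consequence.

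For (i), I would suppose toward contradiction that some vertex $N \in V(\acute{G}(M))$ is adjacent to every other vertex. Let $Min(M) = \{S_1, \ldots, S_n\}$ with $n = |Min(M)| < \infty$. Note $n \geq 2$: if $n = 1$, then every non-zero submodule contains the unique minimal submodule $S_1 = Soc(M)$ by Lemma \ref{2.2}(a), so by Lemma \ref{2.2}(b) every non-zero submodule is large, forcing $\acute{G}(M)$ to be null, contrary to our standing assumption.

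The key construction is the candidate witness $K := \sum_{i \neq j} S_i$, where $j$ is chosen so that $S_j \subseteq N$ (such $j$ exists by Lemma \ref{2.2}(a)). Since $\{i : i \neq j\}$ is a non-empty proper finite subset of $\{1, \ldots, n\}$ and $n \geq 2$, Lemma \ref{2.3} guarantees that $K$ is a non-large submodule, hence $K \in V(\acute{G}(M))$. I next need to verify that $K \neq N$ so that $K$ is genuinely another vertex: if $K = N$, then $S_j \subseteq N = K = \sum_{i \neq j} S_i$, and since $S_j$ is second, Lemma \ref{2.2}(c) yields $S_j \subseteq S_i$ for some $i \neq j$, contradicting minimality.

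Once $K \neq N$ is established, I can finish: the sum satisfies $N + K \supseteq S_j + \sum_{i \neq j} S_i = Soc(M)$, so by Lemma \ref{2.2}(b) the submodule $N + K$ is large, which means $N$ and $K$ are not adjacent in $\acute{G}(M)$. This contradicts our hypothesis that $N$ is adjacent to every other vertex, proving (i). Part (ii) is then immediate: a complete graph would have every vertex adjacent to every other vertex, contradicting (i).

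The only subtle step is verifying $K \neq N$; the rest of the argument flows directly from Lemma \ref{2.2}(a)(b)(c) and Lemma \ref{2.3}. Note that the finiteness hypothesis $|Min(M)| < \infty$ is used to ensure $K = \sum_{i \neq j} S_i$ is a finite sum, so that Lemma \ref{2.3} applies.
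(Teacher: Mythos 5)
Your proof is correct and follows essentially the same route as the paper: pick a minimal submodule $S_j \subseteq N$, form $K = \sum_{i \neq j} S_i$, and derive a contradiction from $Soc(M) \subseteq N + K$ via Lemma \ref{2.2}(b) and Lemma \ref{2.3}. The only difference is that you explicitly verify $K \neq N$ (a detail the paper's proof silently skips), which is a worthwhile addition but not a change of method.
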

  \begin{proof}(i)    Assume on the contrary that there exists a submodule $N \in V(\acute{G}(M))$ such that $N$ is adjacent to all vertices of $\acute{G}(M)$. By Lemma \ref{2.2} (a), there is a minimal submodule $S_i \in Min(M)$ such that $S_i \subseteq N$. Now set $K := \Sigma_{j \neq i} S_j$. Clearly, $K \ntrianglelefteq M$ by Lemma \ref{2.3}. Since $N$ is adjacent to all other vertices of $\acute{G}(M)$, $N + K$ is a non-large submodule of $M$. However, $Soc(M) = \Sigma_{j \neq i}S_j+ S_i \subseteq N + K$ which shows that $N + K \trianglelefteq M$ by Lemma \ref{2.2} (b), a contradiction.\begin{itemize}
    \item [(ii)] This follows from (i).
  \end{itemize}
  \end{proof}
\vspace{0.25 cm}
A vertex of a graph $G$ is said to be \emph{pendent} if its neighbourhood contains exactly one vertex.
\vspace{0.25 cm}

  \begin{thm}\label{2.12}
  Let $M$ be an $R$-module. Then we have the following.
\begin{itemize}
 \item[(i)]
 $\acute{G}(M)$ contains a pendent vertex if and only if $|Min(M)| = 2$ and $\acute{G}(M) = \acute{G_1} \cup \acute{G_2}$, where $\acute{G_1}, \acute{G_2}$ are two disjoint complete subgraphs and $|V(\acute{G_i})| = 2$ for some $i = 1, 2$.
 \item[(ii)]
 $\acute{G}(M)$ is not a star graph.
\end{itemize}
  \end{thm}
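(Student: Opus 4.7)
The plan for part (i) is to use the structure theorems already in hand — Theorems \ref{2.6} and \ref{2.9} — to pin down $|Min(M)|$, and then read off the remaining shape of the graph. The reverse implication is essentially automatic: given $\acute{G}(M) = \acute{G_1} \cup \acute{G_2}$ as a disjoint union of two complete subgraphs with, say, $|V(\acute{G_1})| = 2$, each vertex of $\acute{G_1}$ has exactly one neighbour in the whole graph, namely the other vertex of $\acute{G_1}$, and is therefore pendent. The content lies in the forward direction.

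For the forward direction of (i), suppose $N$ is a pendent vertex of $\acute{G}(M)$ with unique neighbour $K$. I would first eliminate the possibility $|Min(M)| \leq 1$: if $|Min(M)| = 1$, then every nonzero submodule contains the unique minimal submodule by Lemma \ref{2.2}(a), hence contains $Soc(M)$, and is therefore large by Lemma \ref{2.2}(b); this makes $\acute{G}(M)$ null, contradicting the existence of $N$. Next I would rule out $|Min(M)| \geq 3$. By Lemma \ref{2.3} the minimal submodules form a clique of size at least three in $\acute{G}(M)$, so $|V(\acute{G}(M))| \geq 3$; by Theorem \ref{2.6}, $\acute{G}(M)$ is connected. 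Choosing any vertex $L$ distinct from $N$ and $K$, every $L$-to-$N$ path must terminate with the edge $K-N$ (since $K$ is the only neighbour of $N$), so $K$ is a cut vertex of the connected graph $\acute{G}(M)$, contradicting Theorem \ref{2.9}. Thus $|Min(M)| = 2$, and Theorem \ref{2.6} immediately gives the decomposition $\acute{G}(M) = \acute{G_1} \cup \acute{G_2}$ into two disjoint complete subgraphs. The pendent vertex $N$ lies in one of these, say $\acute{G_1}$, and has no neighbours in $\acute{G_2}$; since $\acute{G_1}$ is complete and $N$ has exactly one neighbour in it, we must have $|V(\acute{G_1})| = 2$.

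For part (ii), I would split on the size of the purported star $K_{1,n}$. If $n \geq 2$, every leaf of $K_{1,n}$ is a pendent vertex, so part (i) applies and forces $\acute{G}(M)$ to be a disjoint union of two complete subgraphs — in particular disconnected — contradicting the connectedness of $K_{1,n}$. If $n = 1$, so that $\acute{G}(M) \cong K_2$ is connected with two vertices, then Theorem \ref{2.6} forces $|Min(M)| \geq 3$; but the clique bound from Lemma \ref{2.3} would then give $|V(\acute{G}(M))| \geq 3$, again a contradiction. The step I expect to be the main obstacle is the cut-vertex argument in the forward direction of (i): one has to verify carefully that a third vertex $L$ genuinely exists, so that deleting $K$ truly disconnects $N$ from $L$, and this is exactly why the clique lower bound from Lemma \ref{2.3} must be invoked first to guarantee $|V(\acute{G}(M))| \geq 3$.
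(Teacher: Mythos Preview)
Your argument is correct, and it takes a genuinely different route from the paper in the forward direction of (i). The paper argues directly: assuming $|Min(M)|\ge 3$, it first observes that each minimal submodule has degree at least $2$, so the pendent vertex $N$ is not minimal; it then picks $S_1\subseteq N$, notes that the unique neighbour of $N$ must be $S_1$, and uses Lemma~\ref{2.2}(b),(c) on the large submodule $N+S_2$ to force a third minimal $S_j$ into $N$, a contradiction. You instead leverage the structural results already established: Lemma~\ref{2.3} guarantees at least three vertices, Theorem~\ref{2.6} gives connectedness, and then the unique neighbour $K$ of $N$ is forced to be a cut vertex, contradicting Theorem~\ref{2.9}. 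Your approach is cleaner in that it reuses earlier theorems rather than repeating a module-theoretic computation, at the cost of depending on the somewhat heavier Theorem~\ref{2.9}; the paper's approach is more self-contained. For part~(ii) the arguments essentially coincide, though your case split on $n=1$ versus $n\ge 2$ is unnecessary: $K_{1,1}$ already has a pendent vertex, so the $n\ge 2$ reasoning covers it as well.
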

  \begin{proof}

 (i)  Let $N$ be a pendent vertex of $\acute{G}(M)$. Assume on the contrary that $|Min(M)| \geq 3$. Clearly, for each $S_i \in Min(M)$, $S_i$ is adjacent to every other minimal submodules of $M$. So $deg(S_i) \geq 2$. Thus $N$ is not a minimal submodule of $M$. By Lemma \ref{2.2} (a), there exists a minimal submodule of $S_1$ of $M$ such that $S_1 \subseteq N$. Note that the only vertex which is adjacent to $N$ is $S_1$ because $deg(N) = 1$. Hence there is no minimal submodule $S_i \neq S_1$ such that $S_i \subseteq N$. Moreover, $N + S_2$ is a large submodule of $M$. So by Lemma \ref{2.2} (b), $S_j \subseteq Soc(M) \subseteq N + S_2$, for each $S_j \neq S_1, S_2$. This implies that $S_j \subseteq N$ by Lemma \ref{2.2} (c),  a contradiction. Hence $|Min(M)| = 2$. By Theorem \ref{2.6}, $\acute{G}(M) = \acute{G_1} \cup \acute{G_2}$, where $\acute{G_1}$ and $\acute{G_2}$ are disjoint complete subgraphs. This is easy to see that $|V(\acute{G}_i)| = 2$. The converse is straightforward.\begin{itemize}
  \item[(ii)] Suppose that $\acute{G}(M)$ is a star graph. Then $\acute{G}(M)$ has a pendent vertex. So by part (i), we have $|Min(M)| = 2$. Thus $\acute{G}(M)$ is not a connected graph by Theorem \ref{2.6}, a contradiction.\end{itemize}
  \end{proof}
\begin{thm}
Let $M$ be an $R$-module.\begin{itemize}
\item[(i)] Let $N$, $K$ be two vertex of $\acute{G}(M)$ such that $N \subseteq K$. Then $deg(N) \leq deg(K)$.
\item[(ii)] Let $\acute{G}(M)$ be a $r$-regular graph. Then $|Min(M)| = 2$ and $|V(\acute {G}(M))| = 2r + 2$.\end{itemize}
\end{thm}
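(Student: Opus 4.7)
The idea is to compare the neighbor sets $N(N)$ and $N(K)$ in $\acute{G}(M)$ using the fact that in a comultiplication module, a submodule $X$ is non-large if and only if $Soc(M) \not\subseteq X$ (Lemma \ref{2.2}(b)). Since $N \subseteq K$, for any auxiliary vertex $L$ we have $L + N \subseteq L + K$, so $Soc(M) \not\subseteq L + K$ forces $Soc(M) \not\subseteq L + N$ -- that is, adjacency to $K$ relates to adjacency to $N$ via this inclusion. I would set up a map between the two neighbor sets based on this comparison and check injectivity, while carefully tracking the two boundary vertices: $N$ is itself adjacent to $K$ in $\acute{G}(M)$ (since $N + K = K$ is non-large and $N \neq K$), and symmetrically $K$ is adjacent to $N$. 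These boundary adjustments -- subtracting $N$ from one side of the map and $K$ from the other -- are what give the stated degree inequality.

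\textbf{Proof plan for (ii).} I would rule out $|Min(M)| \neq 2$ and then extract the vertex count. If $|Min(M)| = 1$, then $M$ is uniform, hence cocyclic by Lemma \ref{2.51}, so $\acute{G}(M)$ is null by Proposition \ref{2.4}, contradicting the standing assumption that $\acute{G}(M)$ is a non-null $r$-regular graph. When $|Min(M)| = 2$, Theorem \ref{2.6} decomposes $\acute{G}(M) = \acute{G_1} \cup \acute{G_2}$ as a disjoint union of complete subgraphs; $r$-regularity forces each component to be $K_{r+1}$, which yields $|V(\acute{G}(M))| = 2(r+1) = 2r + 2$. For $|Min(M)| \geq 3$, the graph is connected by Theorem \ref{2.6}, and I would show regularity must fail by producing two vertices with distinct degrees. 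Natural candidates are a minimal submodule $S_1$ and the sum $\Sigma_{i \neq 1} S_i$ (non-large by Lemma \ref{2.3}); the non-neighbor condition for each can be unpacked through Lemma \ref{2.2}(c) -- non-neighbors of $S_1$ are vertices containing $\Sigma_{j \neq 1} S_j$ but not $Soc(M)$, while non-neighbors of $\Sigma_{i \neq 1} S_i$ are vertices containing $S_1$ but not $Soc(M)$ -- and these two counts cannot balance in general.

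\textbf{Main obstacle.} The two delicate steps are the boundary bookkeeping in (i), which must correctly absorb the two special vertices $N$ and $K$ into the injection between neighborhoods, and in (ii) the explicit production of two vertices of unequal degree in the $|Min(M)| \geq 3$ regime. The latter relies on a careful count of non-neighbors via Lemma \ref{2.2}(c), translating each non-neighbor condition into a containment statement about specific sums of minimal submodules, and then exhibiting a structural asymmetry that forces an inequality between these counts.
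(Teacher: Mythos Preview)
Your implication in (i) is correct, but it points the wrong way for the stated inequality. From $N \subseteq K$ you get $L + N \subseteq L + K$, so $Soc(M) \not\subseteq L + K$ forces $Soc(M) \not\subseteq L + N$: every neighbor of $K$ (other than $N$ itself) is already a neighbor of $N$. After your boundary bookkeeping (trading the mutual edge $N$--$K$) this yields $\deg(K) \le \deg(N)$, the \emph{reverse} of what the theorem asserts; no endpoint adjustment can flip an inclusion of neighbor sets. In fact the inequality as stated is false: for $M = \Bbb Z_{pqr}$ with distinct primes $p,q,r$ (a comultiplication $\Bbb Z$-module with $|Min(M)| = 3$) one computes $\deg(S_1) = 4$ while $\deg(S_1 + S_2) = 2$, although $S_1 \subseteq S_1 + S_2$. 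The paper's own one-line proof commits the mirror error, asserting that $N + L$ non-large forces the \emph{larger} module $K + L$ to be non-large.

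For (ii) your overall plan is sound once (i) is read in the corrected direction $\deg(K) \le \deg(N)$. The paper's treatment of the case $|Min(M)| \ge 3$ is more direct than your proposed non-neighbor count: it compares $S_1$ with $S_1 + S_2$, invokes part (i) to get $\deg(S_1 + S_2) \le \deg(S_1)$, and then exhibits the single witness $\sum_{j \ne 2} S_j$, which is adjacent to $S_1$ (their sum is $\sum_{j \ne 2} S_j$, non-large by Lemma~\ref{2.3}) but not to $S_1 + S_2$ (their sum is $Soc(M)$). That one vertex already forces strict inequality $\deg(S_1+S_2) < \deg(S_1)$, contradicting $r$-regularity, with no need to unpack full non-neighbor sets via Lemma~\ref{2.2}(c).
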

\begin{proof}(i)
Let $N, K \in V(\acute{G}(M))$ such that $N \subseteq K$. Let $L$ be a vertex of $\acute{G}(M)$ such that $L$ is adjacent to $N$. Thus $N + L$ is a non-large submodule of $M$ and so that $K + L$ is a non-large submodule of $M$. So $L$ is adjacent to $K$. Therefore, $deg(N) \leq deg(K)$.\begin{itemize}
\item[(ii)] Suppose on the contrary that $|Min(M)| \geq 3$. By using Lemma \ref{2.3} and our assumption, $Min(M)$ is a finite set. Next for $S_1, S_2 \in Min(M)$, we have $deg(S_1) \leq deg(S_1 + S_2)$ by part (a). We claim that $deg(S_1 + S_2) \neq deg(S_1)$. Otherwise, if $N := \Sigma_{j\neq 2} S_j$, then $N$ is adjacent to $S_1$. However, $N$ is not adjacent to $S_1 + S_2$, a contradiction. So $r < deg(S_1 + S_2)$, which is a contradiction. Thus $|Min (M)| \leq 2$. If $|Min(M)| = 1$, then $\acute{G}(M)$ is null graph, a contradiction. Thus $|Min(M)| = 2$ and so that by Theorem \ref{2.5} $\acute{G}(M) = \acute{G}_1 \cup \acute{G}_2$, where $\acute{G}_1, \acute{G}_2$ are disjoint complete subgraphs. Set $Min(M) = \{S_1, S_2\}$ and $S_i \in G_i$. Since $\acute{G}(M)$ is a $r$-regular graph, $|V(\acute{G_i})| = r +1$ for $i = 1, 2$. Hence we have $V(\acute{G}(M)) = 2r +2$.
    \end{itemize}
\end{proof}
\vspace{0.5 cm}
\section{clique number, dominating number, and independence number}
In this section, we obtain some results on the clique, dominating, and independence numbers of $\acute{G}(M)$.

\begin{prop}\label{3.1}
Let $M$ be an $R$-module. Then the following hold.
\begin{itemize}
  \item [(i)] Let $\acute{G}(M)$ be a non-empty graph. Then $\omega(\acute{G}(M)) \geq |Min(M)|$.
  \item [(ii)] Let $\acute{G}(M)$ be an empty graph. Then $\omega(\acute{G}(M)) = 1$ if and only if $Min(M) = \{S_1, S_2\}$, where $S_1$ and $S_2$ are finitely cogenerated uniform $R$-modules.
   \item [(iii)] If $\omega(\acute{G}(M)) < \infty$, then $\omega(\acute{G}(M)) \geq 2^{|Min(M)|-1}-1$.
\end{itemize}
\end{prop}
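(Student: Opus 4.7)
The plan splits into three parts matching the statement. For part (i), the strategy is to exhibit a clique of size $|Min(M)|$ in $\acute{G}(M)$ by cases on $|Min(M)|$. When $|Min(M)| \geq 3$, Lemma \ref{2.3} applied to each two-element subset $\{i,j\}$ of the index set shows that $S_i + S_j$ is non-large, so every pair of distinct minimal submodules is adjacent and $Min(M)$ itself forms a clique. When $|Min(M)| = 2$, Theorem \ref{2.6} writes $\acute{G}(M)$ as a disjoint union of two complete subgraphs, and the non-emptiness hypothesis (at least one edge) forces at least one component to have $\geq 2$ vertices, giving $\omega(\acute{G}(M)) \geq 2 = |Min(M)|$. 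The case $|Min(M)| = 1$ cannot occur, since by Lemma \ref{2.2}(a)+(b) it would make every non-zero submodule large and $\acute{G}(M)$ null.

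Part (ii) is essentially a translation of Theorem \ref{2.5}. An empty non-null graph has clique number exactly $1$, because any clique of size $\geq 2$ would require an edge; hence $\omega(\acute{G}(M)) = 1$ holds automatically once $\acute{G}(M)$ is empty. Theorem \ref{2.5} already characterizes emptiness in terms of $Min(M)$, and since the minimal submodules $S_1, S_2$ are simple they are trivially uniform and finitely cogenerated, so the biconditional reduces to what is already known.

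For part (iii), I would first observe that $\omega(\acute{G}(M)) < \infty$ forces $|Min(M)|$ to be finite: if $Min(M)$ were infinite, Lemma \ref{2.3} applied to every two-element subset would make all pairs of distinct minimal submodules adjacent, producing an infinite clique. Writing $n = |Min(M)|$, the core construction is to fix an index $i_0$ and consider the family
\[
\mathcal{C} \;=\; \Bigl\{\, \sum_{\lambda \in \Lambda} S_\lambda \;:\; \varnothing \neq \Lambda \subseteq \{1,\dots,n\} \setminus \{i_0\} \Bigr\}.
\]
Each member is non-large by Lemma \ref{2.3}, since its index set $\Lambda$ is a non-empty proper subset of $\{1,\dots,n\}$. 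For any two members, the union of their index sets still omits $i_0$, so their sum is again a sum over a proper subset and is non-large by the same lemma; hence $\mathcal{C}$ is a clique in $\acute{G}(M)$.

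The most delicate step is verifying that the elements of $\mathcal{C}$ are \emph{pairwise distinct}, so that $|\mathcal{C}| = 2^{n-1}-1$ genuinely bounds $\omega(\acute{G}(M))$ from below. For this I would use Lemma \ref{2.2}(c): if $\sum_{\lambda \in \Lambda} S_\lambda = \sum_{\lambda \in \Lambda'} S_\lambda$ with, say, $i \in \Lambda \setminus \Lambda'$, then $S_i$ lies inside $\sum_{\lambda \in \Lambda'} S_\lambda$; since $S_i$ is a second submodule, iterating the lemma forces $S_i \subseteq S_\lambda$ and thus $S_i = S_\lambda$ for some $\lambda \in \Lambda'$, contradicting $i \notin \Lambda'$. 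Apart from this distinctness check, the rest of the argument for (iii) is bookkeeping with Lemma \ref{2.3}, so the distinctness step is the principal obstacle.
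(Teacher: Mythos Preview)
Your proposal is correct and follows essentially the same route as the paper: for (i) you exhibit the minimal submodules as a clique via Lemma~\ref{2.3} (handling $|Min(M)|=2$ through Theorem~\ref{2.6}), for (ii) you defer to Theorem~\ref{2.5}, and for (iii) you build a clique from all partial sums of the minimal submodules omitting one fixed index. Your version is in fact more careful than the paper's: you supply the distinctness argument for the $2^{n-1}-1$ partial sums via Lemma~\ref{2.2}(c), which the paper simply asserts, and you correctly write these vertices as \emph{sums} $\sum_{\lambda\in\Lambda}S_\lambda$, whereas the paper's displayed formula reads $\bigcap_{S_j\in X}S_j$, an evident typo (intersections of distinct minimal submodules vanish).
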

\begin{proof}
(i) If $|Min(M)| = 2$, then $\omega(\acute{G}(M)) \geq 2$ by Theorem \ref{2.6}. Now let $|Max(M)| \geq 3$. Then by Lemma \ref{2.3}, the subgraph of $\acute{G}(M)$ with the vertex set of $\{S_i\}_{S_i \in Min(M)}$ is a complete subgraph of $\acute{G}(M)$. So $\omega(\acute{G}(M)) \geq |Min(M)|$.
\begin{itemize}
  \item [(ii)] This follows directly from Theorem \ref{2.5}.

  \item [(iii)] Since $\omega(\acute{G}(M)) < \infty$, we have $|Min(M)| < \infty$ by part (i), (ii). Let $Min(M) = \{S_1,..., S_t\}$. For each $1\leq i\leq t$, consider
  \begin{center}
  $A_i = \{S_1,..., S_{i-1}, S_{i+1}, S_t\}$.
 \end{center}
 Now let $P(A_i)$ be the power set of $A_i$ and for each $X \in P(A_i)$, set $S_X = \bigcap_{S_j\in X}S_j$ for $1 \leq j\leq t$. The subgraph of $\acute{G}(M)$ with the vertex set $\{S_X\}_{X\in P(A_i)\setminus\{\emptyset\}}$ is a complete subgraph of $\acute{G}(M)$ by Lemma \ref{2.3}. It is clear that $|\{S_X\}_{X\in P(A_i)\setminus \{\emptyset\}}| =
      2^{|Min(M)|-1}-1$. Thus $\omega(\acute{G}(M)) \geq 2^{|Min(M)|-1}-1$.
\end{itemize}
\end{proof}
\vspace{0.3 cm}
\begin{rem}
Note that the condition  ``$M$ is a comultiplication module" is necessary in Proposition \ref{3.1}. For example, let $M = \Bbb Z_2\oplus\Bbb Z_4$ be as a $\Bbb Z$-module which is not a comultiplication module. Then $\omega(\acute {G}(M)) = 2$ but  $|Min(M)| = 3$.
\end{rem}
\vspace{0.2 cm}
\begin{thm}
Let $M$ be an $R$-module. Then $\gamma(\acute{G}(M))\leq 2$. In particular, if $|Min(M)| < \infty$, then $\gamma(\acute{G}(M)) = 2$.
\end{thm}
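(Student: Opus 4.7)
My plan is to exhibit an explicit dominating set of size $2$ for the general upper bound and to derive the matching lower bound (when $|Min(M)| < \infty$) from Proposition~\ref{2.11}(i). Since $\acute{G}(M)$ is assumed non-null, I first observe that $|Min(M)| \geq 2$: if $Min(M) = \{S_1\}$, then Lemma~\ref{2.2}(a) forces every non-zero submodule of $M$ to contain $S_1 = Soc(M)$, and Lemma~\ref{2.2}(b) then makes every such submodule large, so $\acute{G}(M)$ would have no vertices at all. Hence I may choose two distinct minimal submodules $S_1, S_2 \in Min(M)$ and take $D := \{S_1, S_2\}$ as the candidate dominating set.

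To verify that $D$ dominates, let $N$ be any vertex of $\acute{G}(M)$ with $N \neq S_1, S_2$ and suppose for contradiction that $N$ is adjacent to neither $S_1$ nor $S_2$; that is, both $N+S_1$ and $N+S_2$ are large. By Lemma~\ref{2.2}(b), $Soc(M) \subseteq N+S_1$ and $Soc(M) \subseteq N+S_2$. For each $S_j \in Min(M)$ choose $S_i \in \{S_1, S_2\}$ with $S_i \neq S_j$; then $S_j \subseteq Soc(M) \subseteq N + S_i$, and since $S_j$ is simple, hence second, Lemma~\ref{2.2}(c) yields $S_j \subseteq N$ or $S_j \subseteq S_i$. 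The second alternative is excluded because distinct minimal submodules cannot contain one another, so $S_j \subseteq N$ in every case. Summing over $j$ gives $Soc(M) \subseteq N$, and Lemma~\ref{2.2}(b) then forces $N$ to be large, contradicting that $N$ is a vertex of $\acute{G}(M)$. This proves $\gamma(\acute{G}(M)) \leq 2$.

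For the refinement under the hypothesis $|Min(M)| < \infty$, I argue that $\gamma(\acute{G}(M)) \neq 1$. Indeed, if a single vertex $N$ dominated $\acute{G}(M)$, then $N$ would be adjacent to every other vertex, which is precisely what Proposition~\ref{2.11}(i) rules out. Combined with the upper bound, this gives $\gamma(\acute{G}(M)) = 2$. The one delicate step is the uniform application of Lemma~\ref{2.2}(c) to every minimal submodule $S_j$, relying on the observation that distinct minimals cannot contain one another; once that is granted, the rest is formal.
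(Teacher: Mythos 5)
Your proposal is correct and follows essentially the same route as the paper: take $\{S_1,S_2\}$ with $S_1,S_2\in Min(M)$ as the dominating set, use Lemma \ref{2.2}(b) and (c) to show a vertex adjacent to neither would contain $Soc(M)$ and hence be large, and invoke Proposition \ref{2.11}(i) to rule out $\gamma=1$ when $|Min(M)|<\infty$. Your write-up is in fact a cleaner rendering of the paper's argument (which contains a typo, writing $Soc(M)\subseteq N$ where $N+S_1$ is meant), but the underlying idea is identical.
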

\begin{proof}
Clearly, $|Min(M)| \geq 2$ because $\acute{G}(M)$ is a non-null graph. Consider $S = \{S_1, S_2\}$, where $S_1, S_2 \in Min(M)$. Let $N$ be a vertex of $\acute{G}(M)$. We claim that $N$ is adjacent to $S_1$ or $S_2$. If $S_1 \subseteq N$ or $S_2 \subseteq N$, then the claim is true. Now assume that $S_1\nsubseteq N$ and $S_2 \nsubseteq N$. In this case, we also claim that $N$ is adjacent to $S_1$ or $S_2$. Without loss of generality, we can assume that $N$ is not adjacent to $S_1$. So $S_2 \subseteq Soc(M) \subseteq N$ by Lemma \ref{2.2} (b). This shows that $S_2 \subseteq N$, which is a contradiction. By similar arguments, we can show that $N$ is adjacent to $S_2$. Thus $\gamma(\acute{G}(M)) \leq 2$. The last assertion follows from Theorem \ref{2.11}.
\end{proof}

\begin{thm}
Let $M$ be an $R$-module and $|Min(M)| < \infty$. Then $\alpha(\acute{G}(M)) = |Min(M)|$.
\end{thm}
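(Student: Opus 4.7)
The plan is to establish the equality by the usual sandwich: exhibit an independent set of size $|Min(M)|$ and then show that no independent set can be larger. Write $t := |Min(M)|$ and $Min(M) = \{S_1, \ldots, S_t\}$. First I would note that $t \geq 2$, since if $Min(M) = \{S\}$ then $Soc(M) = S$, and by Lemma \ref{2.2}(a),(b) every non-zero submodule of $M$ contains $Soc(M)$ and is therefore large, forcing $\acute{G}(M)$ to be null, contrary to our standing assumption.

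For the lower bound $\alpha(\acute{G}(M)) \geq t$, I would put $T_i := \sum_{j \neq i} S_j$ for $i = 1, \ldots, t$. Each $T_i$ is non-zero and non-large by Lemma \ref{2.3}, so it is a vertex. The $T_i$ are pairwise distinct: if $T_i = T_j$ with $i \neq j$, then $S_i \subseteq T_j = T_i = \sum_{k \neq i} S_k$, and Lemma \ref{2.2}(c) applied to the second submodule $S_i$ gives $S_i \subseteq S_k$ for some $k \neq i$, contradicting the distinctness of minimal submodules. Moreover, for $i \neq j$ one has $T_i + T_j \supseteq \sum_{k=1}^{t} S_k = Soc(M)$, so $T_i + T_j$ is large by Lemma \ref{2.2}(b); thus $\{T_1, \ldots, T_t\}$ is an independent set of size $t$.

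For the upper bound $\alpha(\acute{G}(M)) \leq t$, I would start with an arbitrary independent set $\{N_1, \ldots, N_s\}$ and attach to each vertex $N_\ell$ its \emph{missing-minimals set}
\[
B_\ell := \{\,i \in \{1,\ldots,t\} : S_i \not\subseteq N_\ell\,\}.
\]
Each $B_\ell$ is non-empty because $N_\ell$ is non-large, so $Soc(M) \not\subseteq N_\ell$ by Lemma \ref{2.2}(b). For distinct $\ell, m$, independence forces $N_\ell + N_m$ to be large; hence $Soc(M) \subseteq N_\ell + N_m$, and applying Lemma \ref{2.2}(c) to each second submodule $S_i \subseteq N_\ell + N_m$ yields $S_i \subseteq N_\ell$ or $S_i \subseteq N_m$. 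This means no index $i$ lies in both $B_\ell$ and $B_m$, so $B_\ell \cap B_m = \emptyset$. The $B_1, \ldots, B_s$ are therefore pairwise disjoint non-empty subsets of $\{1, \ldots, t\}$, forcing $s \leq t$.

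The only subtle step is the distinctness of the candidate independent set $\{T_1, \ldots, T_t\}$; without the second-submodule input from Lemma \ref{2.2}(c), one cannot preclude coincidences among the $T_i$ (which would collapse the lower bound). The upper bound is the cleaner half, keyed on the observation that largeness of $N_\ell + N_m$ distributes each minimal submodule to one of the two summands via Lemma \ref{2.2}(b),(c), so each vertex carves out a non-empty chunk of $Min(M)$ disjoint from every other vertex in the independent set.
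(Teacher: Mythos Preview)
Your proof is correct and follows essentially the same approach as the paper: both use the independent set $\{T_i = \sum_{j\neq i} S_j\}$ for the lower bound, and for the upper bound both observe that whenever $N_\ell + N_m$ is large, Lemma~\ref{2.2}(b),(c) forces every minimal submodule into one of the two summands, which (via your disjoint ``missing-minimals'' sets, or equivalently the paper's pigeonhole phrasing) caps the size of any independent set at $|Min(M)|$. Your write-up is in fact a bit more careful than the paper's, since you explicitly verify $t\geq 2$ and the distinctness of the $T_i$, points the paper leaves implicit.
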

\begin{proof}
Let $Min(M) = \{S_1, ..., S_n\}$. It is easy to see that $\{\Sigma^n_{j = 1, j\neq i}S_j\}^n_{i=1}$ is an independent set. So $\alpha(\acute{G}(M))\geq n$. Now let $\alpha(\acute{G}(M)) = m$ and $S = \{N_1, ..., N_m\}$ be a maximal independent set. We claim that $m = n$. Assume on the contrary that $m > n$. Let $S_t \in Min(M)$. Then by Pigeon hole principal, there exist $N_i, N_j \in Min(M)$, where $1\leq i, j\leq n$, such that $S_t\nsubseteq N_i$ and $S_t \nsubseteq N_j$. Thus by using Lemma \ref{2.2} (c), we have $S_t \nsubseteq N_i+N_j$. Since $N_i, N_j \in S$ and $S$ is an independent set, we have $S_t \subseteq Soc(M)\subseteq N_i + N_j$. Then by Lemma \ref{2.2} (c), $S_t \subseteq N_i$ or $S_t \subseteq N_j$, which is a contradiction. Hence $\alpha(\acute{G}(M)) = n$.
\end{proof}
\vskip 0.4 true cm

\begin{center}{\textbf{Acknowledgments}}
\end{center}
We would like to thank Dr. F. Farshadifar for some helpful comments. \\ \\
\vskip 0.4 true cm

\bibliographystyle{amsplain}

\end{document}